\numberwithin{equation}{section}
\DeclareFontFamily{U}{mathb}{\hyphenchar\font45}
\DeclareFontShape{U}{mathb}{m}{n}{
      <5> <6> <7> <8> <9> <10> gen * mathb
      <10.95> mathb10 <12> <14.4> <17.28> <20.74> <24.88> mathb12
      }{}
\DeclareSymbolFont{mathb}{U}{mathb}{m}{n}
\DeclareMathSymbol{\righttoleftarrow}{3}{mathb}{"FD}
\theoremstyle{plain}
\newtheorem{prop}{Proposition}[section]
\newtheorem{theo}[prop]{Theorem}
\theoremstyle{definition}
\newtheorem{rema}[prop]{Remark}
\newtheorem{exam}[prop]{Example}
\newcommand{\actsfromleft}{\mathrel{\reflectbox{$\righttoleftarrow$}}}
\newcommand{\actsfromright}{\righttoleftarrow}
\def\cE{{\mathcal E}}
\def\cU{{\mathcal U}}
\def\fA{{\mathfrak A}}
\def\fD{{\mathfrak D}}
\def\fF{{\mathfrak F}}
\def\fK{{\mathfrak K}}
\def\fS{{\mathfrak S}}
\def\Sing{\mathrm{Sing}}
\def\Pf{{\mathrm {Pf}}}
\def\fS{{\mathfrak S}}
\def\bA{{\mathbb A}}
\def\bG{{\mathbb G}}
\def\bP{{\mathbb P}}
\def\bC{{\mathbb C}}
\def\bF{{\mathbb F}}
\def\Aut{\mathrm{Aut}}
\def\Gr{\mathrm{Gr}}
\def\SL{\mathrm{SL}}
\def\sD{\mathsf{D}}
\def\sA{\mathsf{A}}
\def\Burn{\mathrm{Burn}}
\def\lim{\mathrm{lim}}
\def\Sym{\mathrm{Sym}}
\def\SL{\mathsf{SL}}
\def\PSL{\mathsf{PSL}}
\begin{document}

\title[Stable birationalities]{Stable equivariant birationalities of cubic and degree 14 Fano threefolds}

\author{Yuri Tschinkel}
\address{
  Courant Institute,
  251 Mercer Street,
  New York, NY 10012, USA
}

\address{Simons Foundation\\
160 Fifth Avenue\\
New York, NY 10010\\
USA}

\email{tschinkel@cims.nyu.edu}

\author{Zhijia Zhang}
\address{
  Courant Institute,
  251 Mercer Street,
  New York, NY 10012, USA
}
\email{zhijia.zhang@cims.nyu.edu}

\date{\today}

\begin{abstract}
We develop an equivariant version of the Pfaffian-Grassmannian correspondence and apply it to produce
examples of nontrivial twisted equivariant stable birationalities between cubic threefolds and degree 14 Fano threefolds. 
\end{abstract}

\maketitle

\section{Introduction}
\label{sect:intro}

Over algebraically closed fields, birationality of smooth cubic threefolds 
$$
Y\subset \bP^4
$$ 
and associated Fano threefolds $X$ of degree 14 and Picard rank 1, 
$$
X=\bP^9\cap \Gr(2,6)\subset \bP^{14},
$$
is well-known classically.  
Recently, it has been reconsidered from the perspective of vector bundles and derived categories \cite{KuzCubic}. However, birationality in presence of group actions is widely open. 

In this paper, we work over an algebraically closed field $k$ of characteristic zero and focus on equivariant birationalities. 
Our starting point is the following beautiful example: 
There is a distinguished smooth cubic threefold $Y$, the {\em Klein cubic}, with $G:=\Aut(Y)=\PSL_2(\bF_{11})$; it is 
given by
$$
y_1^2y_2+y_2^2y_3+y_3^2y_4+y_4^2y_5+y_5^2y_1=0. 
$$
There is also a unique smooth Fano threefold $X$ of degree 14 and Picard number 1, admitting a regular, generically free, action of $G$. The threefolds $X$ and $Y$ are birational.
However, these $G$-actions are birationally rigid, and thus not equivariantly birational to each other \cite[Theorem 4.3]{BC-finite}. 
We complement this result by showing:

\begin{theo}[Proposition~\ref{prop:psl211}]
\label{thm:klein-stable}
Let $Y$ be the Klein cubic threefold and $X$
the associated Fano threefold of degree 14, with a generically free regular action of  
$G=\mathsf{PSL}_2(\bF_{11})$.
Then there is a $G$-equivariant 
birationality
$$
Y\times \bP^2\times \bP(V)
\sim_G
X\times \bP^2 \times \bP(V),
$$
with trivial $G$-action on $\bP^2$, and projectively linear $G$-action on the projectivization of an irreducible 6-dimensional representation $V$ of the central extension $\tilde{G}=\mathsf{SL}_2(\bF_{11})$. 
\end{theo}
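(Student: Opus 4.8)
The plan is to realize both threefolds as linear sections attached to a single six-dimensional representation, and then to run an equivariant version of the Pfaffian--Grassmannian correspondence, reading off the stabilizing factors $\bP^2$ and $\bP(V)$ from the projective bundles that resolve the correspondence.

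First I would fix the representation theory. Let $V$ be a six-dimensional irreducible representation of $\tilde G=\SL_2(\bF_{11})$ on which the central element $-1$ acts by $-1$; then $-1$ acts trivially on $\wedge^2 V$ and on $\bP(V)$, so both carry honest $G=\PSL_2(\bF_{11})$-actions, and $\bP(V)$ carries the projectively linear $G$-action in the statement. Decomposing the fifteen-dimensional $G$-module $\wedge^2 V$ into irreducibles, I expect a splitting $\wedge^2 V \cong U_{5}\oplus U_{10}$ with $\dim U_5=5$ and $\dim U_{10}=10$ (the only decomposition compatible with the character table of $\PSL_2(\bF_{11})$); this is the datum that feeds the correspondence.

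Next I would identify the two threefolds. The Grassmannian $\Gr(2,V)\subset \bP(\wedge^2 V)$ is $G$-stable, and I would take $X=\Gr(2,V)\cap\bP(U_{10})$; one checks that this codimension-five linear section is a smooth Fano threefold of degree $14$ and Picard rank $1$ with generically free $G$-action, which by the uniqueness recalled in the introduction is the desired $X$. Dually, the Pfaffian cubic $\Pf\subset\bP(\wedge^2 V^{*})$ of degenerate skew forms meets $\bP(U_{5}^{*})\cong\bP^{4}$ in a $G$-invariant cubic threefold $Y$. Rather than computing a Pfaffian by hand, I would show $\dim(\Sym^3 U_5^{*})^{G}=1$, so that there is a \emph{unique} $G$-invariant cubic in $\bP(U_5^{*})$; since the Klein cubic is the unique smooth $G$-invariant cubic threefold in this representation, $Y$ must be $G$-equivariantly isomorphic to it. This sidesteps the delicate explicit matching of the Klein equation with a Pfaffian.

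With $X$ and $Y$ in hand, I would invoke the equivariant Pfaffian--Grassmannian correspondence. The classical correspondence resolves the birational map between $X$ and $Y$ through an incidence variety built from the universal kernel of the skew forms and the universal subspace on the Grassmannian, whose two projections have projective-space fibres. Equivariantly, these projections exhibit a common $10$-dimensional space that is a $G$-linearized projective fibration, of fibre dimension $2+5$, over a $G$-birational model of each of $X$ and $Y$. Applying the no-name lemma to the bundle directions with generically free $G$-action trivializes a $\bP^2$ factor on each side, while the fibre direction coming from the ambient $\bP(V)$ survives as the projectively linear $\bP(V)$ factor common to both sides; assembling these yields $Y\times\bP^2\times\bP(V)\sim_G X\times\bP^2\times\bP(V)$. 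The main obstacle is controlling the $G$-linearizations through the correspondence: one must verify that the auxiliary bundles are genuinely $G$-equivariant, that the ambient actions are generically free so the no-name lemma applies, and---most delicately---that the twist recorded by the central extension $\tilde G$ appears symmetrically on the two sides, so that after cancellation the only surviving fibre factor is the single $\bP(V)$ with its projectively linear action. Establishing this symmetry of twists, together with the smoothness and genericity checks for the specific sections $U_5,U_{10}\subset\wedge^2 V$, is where the real work lies.
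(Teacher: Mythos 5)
Your setup is the same as the paper's: the same irreducible $6$-dimensional representation $V$ of $\tilde G=\SL_2(\bF_{11})$, the same decomposition $\wedge^2(V^\vee)=A\oplus V_{10}$ into a $5$- and a $10$-dimensional irreducible, and the same identification $X=\Gr(2,V)\cap\bP(A^\perp)$, $Y=\bP(A)\cap\Gr(2,V)^\vee$; identifying $Y$ with the Klein cubic via uniqueness of the smooth $G$-invariant cubic (your $\dim(\Sym^3 A^\vee)^G=1$ argument) is a legitimate shortcut and is in substance what the paper does, the smoothness checks being done computationally there. But the actual content of the theorem --- where the factors $\bP^2$ and $\bP(V)$ come from --- is exactly the step you defer as ``where the real work lies,'' and your sketch of that step is inaccurate in two ways. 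First, the classical correspondence does not produce one incidence variety with two projective-space fibrations over $X$ and $Y$: it produces two rank-two bundles, the kernel bundle $\cE_f^\vee\to Y$ and the tautological bundle $\cU_f\to X$, whose total spaces both map to the ambient $V$ with common image a quartic hypersurface $Q_f$, and are hence birational; at the projectivized level the two $\bP^1$-bundles are related by a \emph{flop} over $\bP(V)$, not by projections from a common space. The paper's point (its Remark after the diagram) is precisely that one must work with the vector bundles themselves, not their projectivizations, for the no-name mechanism to engage.

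Second, and this is the genuine gap, $\bP(V)$ does not ``survive'' from the ambient fibre direction. The no-name lemma cannot be applied to $\cE_f^\vee\to Y$ and $\cU_f\to X$ directly: the relevant group is the extension $1\to\bG_m\to G'\to G\to 1$ (with $G'\supset\tilde G$), and its centre acts trivially on the bases $Y$ and $X$, so the action on the base is \emph{not} generically free. The paper's resolution (its Proposition on twisted stable birationality) is to multiply both bundles by an auxiliary faithful $\tilde G$-representation $W$ on which the central $\bG_m$ acts by scalars (one may take $W=V$): then
$$
\cE_f^\vee\times W\to Y\times W,\qquad \cU_f^{\phantom{\vee}}\times W\to X\times W
$$
are equivariant rank-two bundles over bases with generically free action, the birational map $\theta$ between total spaces is equivariant because Kuznetsov's construction is canonical, and the no-name lemma yields $Y\times\bA^2\times W\sim_{\tilde G} X\times\bA^2\times W$ with trivial action on $\bA^2$. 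One then descends along the $\bG_m$-quotient (the projections are $\bG_m$-equivariant, and $W/\bG_m$ is birational to $\bP(W)$) to obtain $Y\times\bP^2\times\bP(V)\sim_G X\times\bP^2\times\bP(V)$, with trivial action on $\bP^2$ and projectively linear action on $\bP(V)$. So $\bP(V)$ is an auxiliary factor inserted precisely to kill the central twist symmetrically and permit descent, not a fibre of the correspondence; without this multiplication-and-descent argument your proof does not close, and with it, it coincides with the paper's.
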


Birational rigidity techniques work well when the group under consideration is {\em large}, with large orbits. We are not aware of results such as \cite[Theorem 4.3]{BC-finite} for actions of any other finite groups on cubic threefolds $Y$ and the associated $X$.  

In this note, we apply the recently developed formalism of equivariant Burnside groups \cite{BnG} to
exhibit actions failing equivariant birationality for such pairs.  
We introduce the notion of {\em twisted} equivariant stable birationality (in Section~\ref{sect:birgeo}) and produce examples of equivariantly nonbirational but twisted equivariantly stably  birational varieties, for actions of finite groups; this relies on 
vector bundle techniques, and in particular, the no-name lemma, which is ubiquitous in equivariant birational geometry. For example, we prove in Section~\ref{sect:C3D4}: 

\begin{theo}
    There exist smooth cubic threefolds $Y$, with associated birational Fano threefolds $X$ of degree 14, such that 
    \begin{itemize}
        \item $Y$ and $X$ carry a generically free regular action of
        $G=\fK_4$, the Klein four-group,
        \item the $G$-actions are not equivariantly birational, 
        \item the $G$-actions are twisted equivariantly stably  birational.
    \end{itemize}
\end{theo}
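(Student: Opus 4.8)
The plan is to produce an explicit pair $(X,Y)$ carrying a $\fK_4$-action and verify the three required properties separately. First I would construct the geometry: start from the classical/derived correspondence between smooth cubic threefolds $Y$ and degree $14$ Fano threefolds $X$, and realize it $G$-equivariantly via the equivariant Pfaffian-Grassmannian correspondence promised in the abstract. Concretely, one chooses a $6$-dimensional vector space $W$ (or its dual) with a $\fK_4$-action, so that the Pfaffian cubic hypersurface $Y=\Pf\subset \bP(\wedge^2 W^\vee)$ and the Grassmannian section $X=\bP^9\cap\Gr(2,W)$ inherit compatible linearizations. The Klein four-group is small enough that I can write down explicit characters: I would pick a linear action of $\tilde G$ (a central extension) on $W$ decomposing into one-dimensional characters, arrange the net of skew forms defining $Y$ and the linear space cutting out $X$ to be $G$-stable, and check by a direct calculation that both threefolds are smooth and that the induced actions are regular and generically free.

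The twisted equivariant stable birationality is where I expect to lean most heavily on the earlier machinery. Following the proof strategy already used for the Klein cubic in Theorem~\ref{thm:klein-stable} (Proposition~\ref{prop:psl211}), the classical birationality $Y\dashrightarrow X$ upgrades to a $G$-equivariant birationality only after stabilizing by projective bundles. The mechanism is the no-name lemma: the universal constructions realizing the correspondence (the bundle of skew forms, the tautological rank-$2$ bundle on the Grassmannian, and the linear system interpolating between the two models) are $G$-linearized vector bundles over a common base, and the no-name lemma lets me replace each such bundle by a trivial bundle with a \emph{projectively linear} twisting action, at the cost of crossing with $\bP^2$ and with $\bP(V)$ for a suitable representation $V$ of $\tilde G$. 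So I would assemble a chain of equivariant birationalities $Y\times\bP^2\times\bP(V)\sim_G\cdots\sim_G X\times\bP^2\times\bP(V)$, exactly as in the $\PSL_2(\bF_{11})$ case but now with $G=\fK_4$ and $V$ an appropriate irreducible representation of the central extension; this is precisely the twisted equivariant stable birationality of Section~\ref{sect:birgeo}.

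The genuinely new content, and the main obstacle, is establishing the \emph{non}-birationality of the two $\fK_4$-actions, since birational rigidity is unavailable for so small a group. Here I would turn to the equivariant Burnside group formalism of \cite{BnG}: I would compute the classes $[Y\actson G]$ and $[X\actson G]$ in the relevant Burnside group $\Burn_n(G)$ (or its symbol-based quotient) and exhibit a numerical invariant on which they disagree. This requires understanding the fixed loci and stabilizers of the nontrivial involutions in $\fK_4$ on each threefold, reading off the induced actions on the normal bundles and residual varieties, and summing the resulting symbols. The delicate points are (i) choosing the linearization so that the fixed-point data on $Y$ and on $X$ are genuinely different as Burnside symbols—not merely superficially—and (ii) ruling out all cancellations and the blow-up/blow-down relations that could make the classes coincide. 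Because $\fK_4$ has three involutions and several characters, the bookkeeping of $(H,Y^H,\beta)$-triples is substantial, and I would expect to isolate a single involution whose fixed curve or its normal-bundle weights already distinguish the two classes, thereby certifying equivariant non-birationality while the stabilized versions remain birational.
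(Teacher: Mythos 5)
Your outline reproduces the paper's architecture faithfully — equivariant Pfaffian--Grassmannian construction, Proposition~\ref{prop:stable} for the twisted stable birationality, and incompressible Burnside symbols for nonbirationality — but the concrete construction you propose contains a step that would fail. You suggest choosing the $6$-dimensional representation to decompose into \emph{one-dimensional characters} of a central extension $\tilde G$ of $\fK_4$. If $V$ splits into characters on which the center acts by scalars, then after twisting by one of them $\bP(V)$ is the projectivization of an honest $\fK_4$-representation, which contains $G$-stable hyperplanes $\Pi$ with generically free action; by Remark~\ref{rema:birat} the map $\varrho_\Pi$ of \eqref{eqn:rho} is then $G$-equivariant and yields $Y\sim_G X$ outright, destroying the second bullet of the theorem. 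This is exactly the phenomenon the paper records after Proposition~\ref{prop:lift} for $\fF_5$ and $C_8$: a $1$-dimensional subrepresentation forces equivariant birationality of $Y$ and $X$. The paper circumvents this by \emph{not} constructing the example with $\fK_4$ alone: it works inside $G=C_3\rtimes\fD_4$ with $\tilde G\simeq C_3\rtimes\fD_8$ acting via $V=V_2\oplus V_4$, irreducible of dimensions $2$ and $4$ (see \eqref{eqn:V4}), so that $\bP(V)$ admits no stable hyperplane even after restricting to the preimage of $\fK_4$; the theorem then follows by restricting the $G$-actions on $Y$ and $X$ to $\fK_4\subset G$.

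Two further points need sharpening. First, "a single involution whose fixed curve or normal-bundle weights distinguish the two classes" is too optimistic as stated: weight data at points and along curves is subject to the vanishing and blow-up relations, and what actually survives is the incompressible divisor symbol $(C_2,\,C_2\actsfromleft k(S),\,(1))$, where one involution fixes a cubic \emph{surface} $S\subset Y$ and the residual involution on $S$ fixes a smooth cubic curve (Example~\ref{exam:incomp}, via \cite[Proposition 2.6]{CTZ}); on $X$ the $C_2$-fixed strata have dimension at most one, so no such symbol can be created by blowups, and freeness of $\Burn_3^{\rm{inc}}(G)$ rules out the cancellations you worry about. Moreover this only works for the right conjugacy class of $\fK_4$: the action of type \eqref{eqn:0} (character $(5,-3,1,-3)$ on $A$) produces the symbol, whereas type \eqref{eqn:1} — the class occurring in $\PSL_2(\bF_{11})$ — does not, and there the Burnside invariants fail to separate $Y$ from $X$ (Section~\ref{sect:psl2}). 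Second, in the stabilization step the no-name lemma cannot be applied "over a common base": $\cE_f^\vee$ and $\cU_f$ live over $Y_f$ and $X_f$ respectively and are identified only birationally through the quartic $Q_f\subset V$; since the $\tilde G$-action on the bases has generic stabilizer $\bG_m$, one must first multiply by a faithful representation $W$ (the paper takes $W=V_4$) and then descend along the $\bG_m$-quotient, which is precisely the content of Proposition~\ref{prop:stable}.
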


The proofs proceed via an equivariant version of the classically known Pfaffian-Grassmannian construction, recalled in Section~\ref{sect:dual}. 
We classify automorphisms of smooth cubic threefolds admitting an equivariant Pfaffian realization, in Proposition~\ref{prop:lift}.  
In Section~\ref{sect:psl2}, we explain our approach in the case of $G= \PSL_2(\bF_{11})$; we show that the recently introduced Burnside invariants {\em do not} distinguish the actions on $Y$ and $X$. In Section~\ref{sect:C3D4}, we turn to $G= C_3\rtimes \mathfrak D_4$ and its subgroup $\fK_4$. 
In this case, birational rigidity techniques fail, but the Burnside invariants show nonbirationality of the actions. In Sections~\ref{sect:cubic-sing} and \ref{sect:s5},
we consider actions of dihedral groups and of the symmetric group $\fS_5$, on 
 {\em singular} $Y$ and $X$.

\

\noindent
{\bf Acknowledgments:} 
We are grateful to Brendan Hassett for his interest, numerous suggestions, and encouragement. 
Comments by Christian B\"ohning, Hans-Christian von Bothmer, and Andrew Kresch were also very helpful.
The first author was partially supported by NSF grant 2301983. 

\section{Equivariant birational geometry}
\label{sect:birgeo}

\subsection*{Birationality and stable birationality}
We consider $G$-varieties, i.e., projective varieties $X$ with a
regular, generically free action of a linear algebraic group $G$;
in most of our applications, $G$ is a finite group. 
Equivariant birationality of $G$-varieties $Y,X$ is denoted by 
$$
Y\sim_G X.
$$
Equivariant stable birationality means that 
$$
    Y\times \bP^m\sim_G X\times \bP^m, 
$$
for some $m$, with {\em trivial} action on the second factor. 
Examples of equivariantly nonbirational but stably birational actions are sought after, see \cite[Section 4]{kollar}, and produced  in 
\cite{HT-torsor}, \cite{Boe}, \cite{Boe-2}.

These notions should be viewed as analogous to birationality and stable birationality of varieties over nonclosed fields. 

\subsection*{Twisted equivariant stable birationality}
One important distinction between equivariant geometry and geometry over nonclosed fields is that there is only {\em one} projective space of a given dimension, but possibly several, nonbirational $\bP(V)$ for a (projectively) linear action of a finite group,  see \cite{TYZ-3}. 
This leads us to the notion of {\em twisted equivariant stable birationality}, 
when 
$$
Y\times  \prod_j \bP(V_j) \sim_G X\times \prod_j \bP(V_j),
$$ 
where $V_j$ are linear representations of extensions of $G$ such that $G$ acts (projectively) linearly on $\bP(V_j)$, for all $j$.  This does not preclude equivariant stable birationality of $Y$ and $X$, a priori. See, e.g., \cite[Section 2]{HT-intersect} for a discussion of (projectively) linear actions, linearizability, and stable linearizability. 

Over nonclosed fields, this notion is analogous to birationality after multiplication with Brauer-Severi varieties.

\subsection*{No-name lemma}
Let $\mathcal E\to X$ be a $G$-vector bundle of rank $m$, with a generically free $G$-action on $X$ and $\cE$. 
    Then 
    $$
    \mathcal E\sim_{G} X\times \bP^m,
    $$
    with trivial action on $\bP^m$, see, e.g., \cite[Section 4.3]{CGR}.
In particular, for any $G$-representation $V$ of dimension $m$, one has  
$$
X\times V \sim_{G} X\times \bP^m, 
$$
with trivial action on $\bP^m$. This also implies that all faithful {\em linear} actions of a finite group $G$ on projective spaces are stably birational. 
For {\em projectively linear} actions of $G$ arising from faithful representations $V,W$ of a central extension $\tilde{G}$ of $G$, with the same central character,  
the no-name lemma applied to the extension
$$
1\to \bG_m\to G'\to G\to 1, 
$$
(where $G'$ contains $\tilde{G}$)
shows that
\begin{equation} 
\label{eqn:proj}
\bP(W)\times \bP^{m} \sim_G \bP(W\oplus V),  \quad m=\dim(V), 
\end{equation}
with trivial action on $\bP^{m}$. 
The no-name lemma substantially simplifies the birational geometry of equivariant vector bundles; we will apply it in Section~\ref{sect:dual}.

\subsection*{Equivariant Burnside groups}

This formalism produces a homomorphism from the free abelian group on {\em equivariant birational types}, i.e., equivariant birationality classes of $n$-dimensional varieties, to 
$$
\Burn_n(G),
$$
a group defined by generators
$$
\mathfrak s=(H, Z\actsfromleft k(F), \beta),
$$
subject to conjugacy and blowup relations \cite{BnG}. 
The class of a $G$-action on a variety is computed on a {\em standard model}, see \cite[Section 7.2]{HKTsmall}. On such a model $X$, all stabilizers are abelian, and the class
$$
[X\actsfromright G]:=\sum_{H,F}\,\, (H, Z\actsfromleft k(F), \beta) \in \Burn_n(G)
$$
is a sum over all (conjugacy classes of) abelian subgroups $H\subseteq G$, and strata $F\subseteq X$, of dimension $d\le n$, with generic stabilizer $H$ and residual action of $Z\subseteq Z_G(H)/H$; here $\beta=(b_1,\ldots, b_{n-d})$ is the collection of weights of $H$ in the normal bundle to $F$. 

One of the relations in $\Burn_n(G)$ states that the symbol $\mathfrak s$ vanishes if there exists a nonempty $I\subseteq [1,\ldots, n-d]$ such that $\sum_{i\in I} b_i=0$.

There is a subgroup 
$$
\Burn_n^{\rm{inc}}(G)\subset \Burn_n(G)
$$
{\em freely} generated by {\em incompressible} symbols, see, e.g., \cite[Section 3.6]{TYZ-3}. In many situations, we can distinguish $G$-actions already via projection of  $[X\actsfromright G]$ to this subgroup. 
We record: 

\begin{exam}
\label{exam:incomp}
Let $Y\subset \bP^4$ be a cubic threefold with the action of $G=\mathfrak K_4$ via \eqref{eqn:0}. Note that one of the generating involutions fixes a cubic surface $S\subset Y$, and the fixed locus of the residual involution on $S$ is a cubic curve $C\subset S$.

Assume that $C=Y^G$ is {\em smooth}. Then the symbol
$$
(C_2, C_2\actsfromleft k(S), (1))
$$
is incompressible, and the $G$-action is not linearizable, see \cite[Proposition 2.6]{CTZ}. 

Such a symbol also arises from a 
$\mathfrak K_4$-action on a 4-nodal cubic threefold \cite[Example 5.2]{CTZ}; or from a 
$C_4$-action in \cite[Example 2.7]{CTZ}.
\end{exam}

\begin{exam} 
\label{exam:dih}
Let $Y\subset \bP^4$ be a singular cubic threefold 
given by 
$$
y_1y_2y_3+f_3(y_3,y_4,y_5)=0,
$$
where $f_3$ is a cubic form. It carries the action of the dihedral group $G:=\mathfrak D_{2n}$ of order $4n\ge 8$, via
$$
y_1\leftrightarrow y_2, \quad (y_1,y_2,y_3,y_4,y_5) \mapsto (\zeta y_1,\zeta^{-1}y_2, y_3,y_4,y_5),
$$
where $\zeta$ is a primitive root of unity of order $2n$. 

Then $G$ fixes a plane cubic curve $C\subset \bP^2_{y_3,y_4,y_5}$ given by $f_3=0$. Assume that $C$ is {\em smooth}.
To reach a standard model, one has to blow up $C$. Computing the class 
$[Y\actsfromright\fD_{2n}]$ on 
such a model, we find the {\em incompressible} symbol
\begin{align}\label{eq:2d4incomp}
    (C_2,\fD_{n}\actsfromleft k(C)(t), (1)), \quad n \ge 2, 
\end{align}
where $C$ is a genus 1 curve, see \cite[Proposition 5.17]{CMTZ}. Choosing  roots of unity $\zeta,\zeta'$ such that $\zeta\neq \pm \zeta'$, we obtain equivariantly nonbirational, nonlinearizable $G$-actions on the rational cubic threefold $Y$. 
\end{exam}

\section{Dualities}
\label{sect:dual}

\subsection*{Pfaffian-Grassmannian correspondence}
We follow the presentation in \cite[Section 2]{KuzCubic}, which builds on classical constructions, see, e.g., \cite{Puts}, \cite{IM}.
Let $A$ and $V$ be vector spaces over $k$ of dimension 5, respectively, 6. Let 
$$
f:A\to \wedge^2(V^{\vee})
$$ 
be an injective linear map, called {\em $A$-net of skew forms on $V$}. It is called {\em regular}, if $\mathrm{rk}(f(a))\ge 4$, for all nonzero $a\in A$. 

We consider the following varieties, which are smooth for generic $f$: 
\begin{align*} 
Y_f & = \bP(f(A)) \cap \Gr(2,V)^\vee, \text{a cubic threefold}, \\
X_f & = \bP(f(A)^{\perp}) \cap \Gr(2,V), \text{a degree 14 Fano threefold},
\end{align*} 
where $\Gr(2,V)^\vee\subset\wedge^2(V^\vee)$ is the projective dual of $\Gr(2,V)$, given by the vanishing of the Pfaffian cubic form $\Pf\in\Sym^3(\wedge^2(V^\vee))$. Note that $\Gr(2,V)^\vee$ parametrizes skew forms on $V$ of rank at most four.
Let $\cU_f$ be the restriction of the tautological rank two bundle on $\Gr(2,V)$ to $X_f$. For a {\em regular} $f$, there is a natural rank two 
theta-bundle $\cE_f$ over $Y_f$, see \cite[Section 2]{KuzCubic} and \cite[Theorem 2.2]{IM}; 
the dual bundle $\cE_f^\vee$ is the rank two subbundle of the rank six trivial bundle $Y_f\times V$ given by 
$$
\cE_f^\vee=\{(y,v)\in Y_f\times V\mid v\in \ker(y)\}.
$$
The injective classifying morphism 
$$
\kappa: Y_f\to \Gr(2,V),\quad y\mapsto\ker(y)
$$ 
induces an embedding of $Y_f$ in $\Gr(2,V)$. Under this embedding, $\cE_f^\vee$ is naturally identified with the restriction of the tautological bundle from $\Gr(2,V)$ to $Y_f$. 
Furthermore,
\begin{equation} \label{eqn:sing}
\mathrm{Sing}(X_f)=\mathrm{Sing}(Y_f) = X_f\cap Y_f\subset \Gr(2,V).   
\end{equation}
By \cite[Theorem 2.18]{KuzCubic}, for a {\em regular} $f$, we have a diagram
\begin{equation} \label{dia:vec}
\begin{tikzcd}
\cE^\vee_f \ar[d] \ar[dr,"\psi"] \ar[rr,dashed,"\theta"]&     & \ar[dl,swap,"\phi"] \cU_f \ar[d] \\
Y_f                          & V & X_f
\end{tikzcd}
\end{equation}

 

\noindent 
where the morphisms $\psi,\phi$ are induced by 
the natural projection from the tautological bundle over $\Gr(2,V)$ to $V$.
The images of $\psi$ and $\phi$ can be described as follows
\begin{align*} 
\psi(\cE_f^\vee)& = \{v \in V \mid v \in \ker(f(a)) \text{ for some nonzero }  a \in A\},
\\
\phi(\cU_f) & = \{v \in V\mid v \in \ell \text{ for some } \ell \in X_f\subset \Gr(2,V)\}.
\end{align*}  
Linear algebra shows that 
$
\psi(\cE_f^\vee) = \phi(\cU_f),
$ see e.g., \cite[Proposition 2.11, 2.15]{KuzCubic}, \cite[Theorem B]{Puts}. In fact, the common image of $\psi$ and $\phi$ is a quartic hypersurface
$$
Q_f\subset V,
$$
singular along the affine cone $\tilde C_f$ over a curve $C_f\subset \bP(Q_f)\subset\bP(V)$.  Both $\psi$ and $\phi$ are isomorphisms on the complement $Q_f\setminus \tilde C_f$. The composition $\theta:=\phi^{-1}\circ \psi$ is then a birational map between vector bundles. After projectivization, $\theta$ induces a birational map between $\bP^1$-bundles 
$$
\bP(\cE^\vee_f)\dashrightarrow\bP(\cU_f),
$$
which is a flop in a ruled surface \cite[Theorem 2.17]{KuzCubic}.

\begin{rema}
In the literature, the existence of the diagram \eqref{dia:vec} and the birationality of $\theta$ are proved for the projectivizations $\bP(\cE^\vee_f), \bP(\cU_f)$ and $\bP(V).$ However, the underlying linear algebra proof applies to the vector bundles verbatim. To study the equivariant geometry of this construction, we use the diagram of vector bundles \eqref{dia:vec}.
\end{rema}

As explained in \cite[Remark 2.19]{KuzCubic}, fixing a hyperplane $\Pi\subset \bP(V)$, there are induced birational maps
\begin{equation}
    \label{eqn:rho}
\varrho_\Pi: Y_f\stackrel{\sim}{\dashrightarrow} \Pi\cap \bP(Q_f) \stackrel{\sim}{\dashleftarrow} X_f.
\end{equation}

\subsection*{Equivariant Pfaffian-Grassmannians}
To arrive at an action of a finite group $G$ on $Y$ and $X$, 
we start with a faithful 6-dimensional $\tilde G$-representation $V$ of a central extension 
$$
1\to \mathbb G_m\to \tilde{G}\to G\to 1
$$
which induces a generically free action of $G$ on $\bP(V)$, i.e., the central $\bG_m$ is acting trivially on $\bP(V)$.  
Let $A$ be a 5-dimensional vector space and
\begin{align}\label{eqn:Anet}
    f: A\to \wedge^2(V^\vee)
\end{align}
a regular $A$-net such that 
$$
f(A) \subset \wedge^2(V^\vee)
$$
is a 5-dimensional $\tilde{G}$-invariant subspace. Assume that the induced $G$-actions on $\bP(f(A))$ and on $\bP(f(A)^\perp)$ are also generically free. Then 
\begin{equation} 
\label{eqn:xy}
Y_f:=\Gr(2,V)^\vee \cap \bP(f(A)), \quad X_f:=\Gr(2,V)\cap \bP(f(A)^\perp), 
\end{equation}
carry $G$-actions, which are again generically free.  
The $\tilde{G}$-actions naturally lift to the vector bundles $\cE^\vee_f$ and $\cU_f.$ 

We refer to this construction as {\em equivariant} {\em Pfaffian-Grassmannian correspondence}. 
A generically free $G$-action on a smooth cubic threefold $Y$ is called {\em equivariantly Pfaffian} if it arises from an equivariant Pfaffian-Grassmannian 
correspondence. 

\begin{rema} 
Recall that every smooth cubic threefold over $\bC$ admits a Pfaffian representation \cite{AdlerRam}, \cite{MT}; in fact, this holds also for singular cubics \cite{coma}. 
By \cite[Theorem 8.2]{Beau-Pfaff}, a smooth cubic threefold $Y$ over a nonclosed field $k$ is Pfaffian if and only if there is 
an arithmetically Cohen-Macaulay curve $C\subset Y$, not contained in a hyperplane, with $K_C=\mathcal O_C$, i.e., an elliptic normal quintic, defined over $k$. 

In the equivariant context, this criterion fails: the presence of  a $G$-stable elliptic quintic $C$ on $Y$ implies equivariant birationality with the corresponding $X$, see, e.g., \cite[Theorem 1.1]{IM}. However, in Section~\ref{sect:C3D4}, we produce examples of equivariantly Pfaffian actions on smooth $Y$ and $X$, which are not equivariantly birational.   
\end{rema}

\subsection*{Twisted equivariant stable birationality}
Given an equivariant Pfaffian-Grassmannian correspondence,   
the diagram \eqref{dia:vec} constructed from a regular $A$-net \eqref{eqn:Anet} is $\tilde G$-equivariant. In particular, the corresponding birational maps $\psi,\phi$ and $\theta$ in \eqref{dia:vec} are $\tilde G$-equivariant since their constructions are canonical. We have an equivariant birationality 
$$
\cE_f^\vee  \, \sim_{\tilde{G}} \,  \cU_f. 
$$
However, the no-name lemma {\em does not} apply directly to this situation, since the $\tilde{G}$-action has a nontrivial generic stabilizer on the bases $Y_f$ and $X_f$. Therefore, we use a variant: let $W$ be a faithful representation of $\tilde{G}$, inducing a generically free $G$-action on $\bP(W)$, 
with the central $\bG_m$ acting via scalars. E.g., we could put $W=V$. 
Consider the diagram

\

\centerline{
\xymatrix{
\cE_f^\vee\times W \ar[d] \ar@{-->}[r]^{\sim} & \cU_f\times W\ar[d] \\
Y_f\times W  & X_f \times W
}
}

\ 

\noindent 
The horizontal map is a $\tilde{G}$-equivariant birational isomorphism.
The vertical maps are $\tilde{G}$-equivariant rank two vector bundles, and the action of $\tilde{G}$ on the respective bases is generically free. By the no-name lemma, we have
$$
Y_f\times \bA^2\times  W \sim_{\tilde{G}} 
X_f\times \bA^2\times W, 
$$
with trivial $\tilde{G}$-action on the $\bA^2$-factors. 
The projections to the respective bases are equivariant under the action of $\mathbb G_m=\ker(\tilde{G}\to G)$. This implies $G$-equivariant birationality of the quotients
$$
(Y_f\times \bA^2\times  W) /\mathbb G_m\sim_G
(X_f\times \bA^2\times W)/\mathbb G_m, 
$$
and therefore
$$
Y_f\times \bP^2\times \bP(W)\sim_G
X_f\times \bP^2\times \bP(W),
$$
with trivial $G$-action on $\bP^2$ and projectively linear $G$-action on $\bP(W)$.
This yields: 

\begin{prop} 
\label{prop:stable}
Given an equivariant Pfaffian-Grassmannian correspondence, we 
have a twisted equivariant stable birationality between the corresponding cubic threefold $Y_f$ and the associated degree 14 Fano threefold $X_f$.
\end{prop}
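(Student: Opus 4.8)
The plan is to construct the twisted equivariant stable birationality by leveraging the $\tilde G$-equivariant birational map $\theta$ between the vector bundles $\cE_f^\vee$ and $\cU_f$ provided by diagram \eqref{dia:vec}, and then to descend this birationality to the quotient by the central $\gm$. The starting observation is that, because both $\cE_f^\vee$ and $\cU_f$ are constructed canonically from the regular $A$-net $f$, and because $f(A)$ is $\tilde G$-invariant, the entire diagram \eqref{dia:vec} carries a natural $\tilde G$-action for which $\psi$, $\phi$, and hence $\theta = \phi^{-1}\circ\psi$ are all $\tilde G$-equivariant. This gives immediately the equivariant birationality $\cE_f^\vee \sim_{\tilde G} \cU_f$ of total spaces.

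The main obstacle, which I would address next, is that the no-name lemma cannot be applied directly: the $\tilde G$-action on the bases $Y_f$ and $X_f$ is \emph{not} generically free, since the central $\gm \subset \tilde G$ acts trivially on $Y_f, X_f \subset \bP(V)$ while acting by scalars on $V$. To circumvent this, I would twist by an auxiliary faithful $\tilde G$-representation $W$ (for concreteness one may take $W = V$) chosen so that $\gm$ acts by scalars and the induced $G$-action on $\bP(W)$ is generically free. Forming the products $\cE_f^\vee \times W$ and $\cU_f \times W$, I obtain $\tilde G$-equivariant rank-two vector bundles over $Y_f \times W$ and $X_f \times W$, and now the base actions \emph{are} generically free because $W$ is faithful. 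The horizontal map $\theta \times \mathrm{id}_W$ remains a $\tilde G$-equivariant birational isomorphism of total spaces. Applying the no-name lemma to each of these two bundles yields
$$
Y_f\times \bA^2\times W \sim_{\tilde G} X_f\times \bA^2\times W,
$$
with trivial $\tilde G$-action on the $\bA^2$ factors coming from the rank-two fibers.

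The final step is to descend this $\tilde G$-birationality to a $G$-birationality by quotienting by the central $\gm = \ker(\tilde G \to G)$. Here the key point is that the projections of the two sides to their respective bases are $\gm$-equivariant, so passing to the geometric quotients is compatible with the birational map; since $\gm$ acts freely on $W \setminus \{0\}$ and by scalars, the quotient of $\bA^2 \times W$ recovers $\bP^2 \times \bP(W)$ after projectivization, with trivial $G$-action on $\bP^2$ and the projectively linear $G$-action on $\bP(W)$ descended from $W$. This produces
$$
Y_f\times \bP^2\times \bP(W)\sim_G X_f\times \bP^2\times \bP(W),
$$
which is precisely a twisted equivariant stable birationality in the sense of Section~\ref{sect:birgeo}. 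The one subtlety I would verify carefully in carrying this out is that the $\gm$-quotient genuinely commutes with taking the birational map and with the no-name identification --- i.e., that the birational isomorphism of total spaces is $\gm$-equivariant for the scalar action and restricts generically to the fibers in a way compatible with descent --- but since every morphism in sight is canonical and the $\gm$-action is the restriction of the scalar $\tilde G$-action, this compatibility is essentially formal.
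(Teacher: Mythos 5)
Your proposal is correct and follows essentially the same route as the paper's own proof: the $\tilde G$-equivariance of diagram \eqref{dia:vec}, the twist by a faithful $\tilde G$-representation $W$ (e.g.\ $W=V$) to make the base actions generically free, the no-name lemma giving $Y_f\times \bA^2\times W \sim_{\tilde G} X_f\times \bA^2\times W$, and descent along the central $\bG_m$ to obtain $Y_f\times\bP^2\times\bP(W)\sim_G X_f\times\bP^2\times\bP(W)$. The one point you flag as a subtlety --- compatibility of the birational map with the $\bG_m$-quotient --- is handled in the paper exactly as you suggest, by noting the projections to the bases are $\bG_m$-equivariant.
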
 

\begin{rema} 
We do not know whether or not $Y_f$ and $X_f$ are equivariantly stably birational, unless we can apply the no-name lemma directly to $\cE_f^\vee$ and $\cU_f$, i.e., when $V$ can be chosen to be a $G$-representation. 
\end{rema}

\begin{rema}
\label{rema:birat}
If there is a $G$-stable hyperplane $\Pi\subset \bP(V)$, with a genericaly free action of $G$, then $\varrho_\Pi$
    from \eqref{eqn:rho} is $G$-equivariant, and 
    $$
    Y\sim_G X.
    $$
    This happens, e.g., when $G$ is cyclic.  
\end{rema}

\subsection*{Equivariantly Pfaffian actions}

Finite groups which can act regularly and generically freely  on smooth cubic threefolds have been classified in \cite{weiyu}. We recall:
\begin{itemize} 
\item There are 6 maximal groups of automorphisms of smooth cubic threefolds, by \cite[Theorem 1.1]{weiyu}:
\begin{equation} 
\label{eqn:groups}
C_3^4\rtimes \fS_5, \, ((C_3^2\rtimes C_3)\rtimes C_4) \times \fS_3,\,  C_{24}, \, C_{16}, \, \mathsf{PSL}_2(\bF_{11}), \, C_3\times \fS_5.
\end{equation}
\item There are 2 types of actions of 
the Klein four-group $\mathfrak K_4$, given (in suitable coordinates) 
in \cite[Table 2]{weiyu}:
\begin{align}
    \label{eqn:0}&\mathrm{diag}(-1,1, 1, 1, 1), \quad \mathrm{diag}(1,1, -1, 1, 1),\\
    \label{eqn:1}&\mathrm{diag}(-1,-1, 1, 1, 1), \quad \mathrm{diag}(1,-1, -1, 1, 1).
\end{align}
\end{itemize}

\begin{rema}
\label{rema:klein-four}    
The unique (up to conjugation) $\mathfrak K_4\subset \mathsf{PSL}_2(\bF_{11})$ acts via \eqref{eqn:1} on the Klein cubic threefold. On the other hand, the $\mathfrak S_5$ with 
the permutation action on $\bP^4$ contains a $\mathfrak K_4$ acting via \eqref{eqn:0} on the Fermat cubic threefold. By Example~\ref{exam:incomp}, 
this second $\mathfrak K_4$ contributes an incompressible symbol to the class 
of the action in the Burnside group. 
\end{rema}

Here we investigate which groups admit equivariantly Pfaffian actions on smooth cubic threefolds. This is an algorithmic task.  With {\tt Magma}, we implement the following steps: 
\begin{enumerate}
    \item Let $\tilde G$ be one of the Schur covers of $G.$ List faithful 6-dimensional linear $\tilde G$-representations $V$ which induce generically free $G$-actions on $\bP(V)$.
    \item For each such $V$, and each isomorphism class of 5-dimensional subrepresentations of $\wedge^2(V)$,
    choose a generic such $A\subset \wedge^2(V)$; check whether or not the induced $G$-action on $\bP(A)$ is generically free and 
$$
\bP(A)\cap \Gr(2,V)^\vee
$$
is a smooth cubic threefold. 
\end{enumerate}
By construction, any equivariantly Pfaffian action on a smooth cubic threefold can be obtained in this way. Going through the steps (1) and (2), we find
\begin{prop}
\label{prop:lift}
    Let $G$ be a finite group. Then $G$ admits an equivariantly Pfaffian action on 
    a smooth cubic threefold if and only if $G$ is a subgroup of one of the following groups: $$
    \PSL_2(\bF_{11}),\quad C_3\rtimes\fD_4,\quad\fF_5,\quad C_8.
    $$
\end{prop}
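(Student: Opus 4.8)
The plan is to prove Proposition~\ref{prop:lift} by brute-force enumeration, following exactly the algorithmic steps (1) and (2) outlined just above the statement. The key conceptual point is that any equivariantly Pfaffian action on a smooth cubic threefold \emph{must}, by definition, arise from the equivariant Pfaffian-Grassmannian construction, which requires a faithful $6$-dimensional representation $V$ of a Schur cover $\tilde G$ of $G$ inducing a generically free action on $\bP(V)$. This immediately constrains $G$: it must be a group possessing such a representation, which is a strong and computable condition. So the first step is to identify, among all candidate groups $G$, those whose Schur cover $\tilde G$ admits a faithful $6$-dimensional representation $V$ with the central $\bG_m$ (or the relevant Schur multiplier factor) acting by scalars. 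Since the target groups are small, I would use the character tables and representation data available in \texttt{Magma} to list these.

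The second step, for each surviving pair $(G,\tilde G, V)$, is to decompose $\wedge^2(V)$ into $\tilde G$-irreducibles and enumerate the isomorphism classes of $5$-dimensional subrepresentations $A\subset\wedge^2(V)$. For each such class, I would pick a generic $A$ and check two conditions computationally: that the induced $G$-action on $\bP(A)$ is generically free, and that $Y_f=\bP(A)\cap\Gr(2,V)^\vee$ is a \emph{smooth} cubic threefold. Smoothness is verified by computing the Pfaffian cubic form restricted to $\bP(A)$ and testing its Jacobian ideal; regularity of the net $f$ (rank $\ge 4$ everywhere) follows from smoothness via \eqref{eqn:sing}, since $\Sing(X_f)=\Sing(Y_f)=X_f\cap Y_f$. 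The output of this search is precisely the list of groups for which the construction succeeds, and one then checks that every such group is a subgroup of one of $\PSL_2(\bF_{11})$, $C_3\rtimes\fD_4$, $\fF_5$, $C_8$.

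For the converse direction (the ``if''), I would exhibit an explicit equivariantly Pfaffian action for each of the four maximal groups in the list. Producing these four witnessing examples is the crux of the forward implication: for each group one must display a concrete faithful $V$, a concrete $\tilde G$-invariant net $A$, and verify smoothness of the resulting $Y_f$. The reason the list terminates at these four groups, rather than including larger groups from \eqref{eqn:groups} such as $C_3^4\rtimes\fS_5$ or $C_3\times\fS_5$, is that those groups simply lack a faithful $6$-dimensional representation of the required type, or else every invariant $5$-dimensional net $A$ fails to cut out a smooth cubic. Closure under taking subgroups is automatic, since restricting a working $\tilde G$-action to a subgroup preserves all the required properties (genericity of freeness may need a brief check, but a generic member of the net remains generic for a subgroup).

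The main obstacle I anticipate is not any single deep argument but rather the completeness and honesty of the enumeration: one must be certain that step (1) correctly identifies \emph{all} groups admitting a faithful $6$-dimensional $\tilde G$-representation of the correct central character, and that in step (2) the genericity claims (``a generic $A$ works'' versus ``no $A$ works'') are verified rigorously rather than on a single random sample. In particular, ruling out a group requires showing that for \emph{every} isomorphism class of invariant $5$-dimensional net, the generic cubic $Y_f$ is singular (or the action on $\bP(A)$ fails generic freeness) --- a negative, exhaustive statement that is the delicate part of a \texttt{Magma}-assisted proof. I expect the bulk of the genuine work to lie in organizing this finite-but-large case analysis so that the exclusions are provably exhaustive.
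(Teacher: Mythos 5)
Your proposal is correct in substance and runs on the same computational engine as the paper --- steps (1) and (2), explicit witnessing constructions for the four maximal groups, and heredity under subgroups (which, as you say, is automatic: a faithful action of a finite group on an irreducible variety is generically free, and a $\tilde G$-invariant net restricts to an invariant net for the preimage of any subgroup) --- but the architecture of the exclusion is genuinely different from the paper's. You propose a flat enumeration over ``all candidate groups'', which is only finite once one invokes \cite[Theorem 1.1]{weiyu} explicitly: the candidates are \emph{all} subgroups of the six groups in \eqref{eqn:groups}, and one must exclude every such subgroup not contained in the four listed groups, not merely the maximal ones (e.g.\ $\fS_5$, $\fS_4$, $C_3^2$, $C_4\times C_2$, $C_9$, \dots), so the exhaustiveness burden you rightly identify as the delicate point is spread over a large subgroup lattice. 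The paper instead prunes: it runs steps (1)--(2) first only on the cyclic actions of \cite[Table 2]{weiyu}, where the Schur cover is the group itself, finds that exactly the $C_3$-actions with weights $(1,1,1,\zeta_3,\zeta_3^a)$ and the $C_4$-action with weights $(1,1,1,-1,\zeta_4)$ fail, propagates these exclusions through the abelian actions of Table 2 and up the lattice of the six maximal groups, leaving only $\PSL_2(\bF_{11})$, $C_3\rtimes\fD_4$, $\fS_5$, $C_8$ and their subgroups, and then a single extra Schur-cover computation for $\fS_4$ eliminates $\fS_4$ and hence $\fS_5$; this concentrates the hard negative computations in a handful of small groups rather than repeating them across the whole lattice. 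Two local corrections to your text: for $\fS_5$ your first alternative does not occur --- its Schur cover does admit faithful six-dimensional representations with invariant five-dimensional nets (Section~\ref{sect:s5} lists all three choices of $A$), but each resulting cubic is \emph{singular} (six nodes, the Segre cubic, or five nodes) --- so that exclusion really does require the exhaustive negative check, which the paper localizes at $\fS_4$; and your appeal to \eqref{eqn:sing} for regularity is slightly circular, since that identity is stated for regular nets, though the conclusion is correct for the direct reason that a rank-two form in $f(A)$ lies in $\Sing(\Gr(2,V)^\vee)$ and hence is a singular point of $Y_f$, cf.\ Remark~\ref{rema:singu}. Your remaining methodological points are sound: within a fixed isomorphism class the parameter space of nets is irreducible and the locus of $A$ with $Y_f$ singular is closed, so a generic-point (indeterminate-coefficient) computation does rigorously establish the negative statements, exactly as you flag.
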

\begin{proof}

We start with cyclic $G$, listed in \cite[Table 2]{weiyu}; in these cases, the Schur cover of $G$ is itself. Applying steps (1) and (2) with $\tilde G=G$,
we find that the following do not arise from this construction:
\begin{align*}
    G=C_3,\quad &\text{with weights } (1,1,1,\zeta_3,\zeta_3^a),\quad  a=0,1,2,\\
    G=C_4,\quad &\text{with weights } (1,1,1,-1,\zeta_4).
\end{align*}
Excluding all cases in \cite[Table 2]{weiyu} containing one of the $C_3$ or $C_4$ actions above, we find that the actions of the following abelian groups 
$$
C_4\times C_2, \quad C_3^2, \quad  C_9, \quad  C_{15},  \quad C_{16}, \quad  C_{18}, \quad  C_2\times C_3^2, \quad  C_{24}, 
$$
$$
C_4\times C_6, \quad  C_3^3, \quad  C_3\times C_9, \quad  C_4\times C_3^2,  \quad C_2^2\times C_3^2, \quad  C_2\times C_3^3, \quad  C_3^4
$$
on smooth $Y$ and $X$ are not equivariantly Pfaffian. 

Then excluding subgroups of the 6 maximal groups in \cite[Theorem 1.1]{weiyu} which contain a subgroup isomorphic to one of these abelian groups, we are left with 
$$
\PSL_2(\bF_{11}),\quad C_3\rtimes\fD_4,\quad \fS_5, \quad C_8,
$$
and their subgroups.

Applying steps (1) and (2) above to the Schur cover of $\fS_4$, we find that  $\fS_4$ and thus also $\fS_5$ does not admit equivariantly Pfaffian actions on {\em smooth} cubic threefolds.

Excluding $\fS_4$ and $\fS_5$, we are left with 4 maximal groups
$$
\PSL_2(\bF_{11}), \quad C_3\rtimes\fD_4, \quad \mathfrak F_5,\quad
C_8.
$$
These admit equivariantly Pfaffian actions on smooth cubic threefolds, see explicit constructions in Sections~\ref{sect:psl2}, \ref{sect:C3D4}, and \cite{TZ-web}. 
\end{proof}

Since every faithful 6-dimensional representation of $\mathfrak F_5$ and $C_8$ admits a 1-dimensional subrepresentation, the corresponding smooth cubic $Y$ and degree 14 Fano threefolds $X$ are equivariantly birational, by Remark~\ref{rema:birat}. 
Given our interest in {\em nontrivial} (twisted) stable birationalities, we present in Sections~\ref{sect:psl2} and \ref{sect:C3D4} explicit Pfaffian constructions for 
$$
\PSL_2(\bF_{11}), \quad C_3\rtimes\fD_4.
$$
\begin{rema}
    The proof of Proposition~\ref{prop:lift} confirms computations by 
B\"ohning and von Bothmer indicating that regular $C_3$-actions
on smooth cubic threefold
with weights $(1,1,1,1,\zeta_3)$ are not equivariantly Pfaffian. 
On the other hand, we will see in Section~\ref{sect:C3D4} that $C_3$-actions with weights $(1,1,\zeta_3,\zeta_3^2,\zeta_3^2)$ are equivariantly Pfaffian. Proposition~\ref{prop:lift} also shows that $\fS_5$ does not admit equivariantly Pfaffian actions on smooth cubic threefolds. However, an $\fS_5$-action on {\em singular} cubics may arise from the Pfaffian construction, see Section~\ref{sect:s5}. 
\end{rema}
\begin{rema} 
The papers \cite{CTZ-segre}, \cite{CTZ}, \cite{CMTZ} classify actions on cubic threefolds with isolated singularities, under the assumption that the action does not fix any of the singular points, and apply this classification to linearizability questions. It would be interesting to explore the equivariant Pfaffian-Grassmannian correspondence for singular cubic threefolds.  
\end{rema}

\section{Klein cubic threefold and $\PSL_2(\bF_{11})$-actions}
\label{sect:psl2}

Here we apply the equivariant Pfaffian-Grassmannian correspondence of Section~\ref{sect:dual} to the Klein cubic threefold and the associated degree 14 Fano threefold, equipped with the action of 
$
G=\PSL_2(\bF_{11}).
$

\subsection*{Writing the representation}
Let $V$ be one of the two irreducible $6$-dimensional representations of $$
\tilde G:=\SL_2(\bF_{11}).
$$
Assume $V$ has character 
$$
\mathrm{char}(V)=( 6, -6, 0, 0, 1, 1, 0, -1, -1, -\lambda, \lambda + 1, 0, 0, -\lambda-1, \lambda),
$$
$$
 \lambda:= \zeta_{11}^9 + 
    \zeta_{11}^5 + \zeta_{11}^4 + \zeta_{11}^3 + \zeta_{11}.
$$
Then 
$$
\wedge^2(V^\vee)=A\oplus V_{10},
$$
where $A$ and the dual $V_{10}^\vee= A^\perp\subset \wedge^2(V)$ are
faithful irreducible representations of $G$ of dimension 
$5$, respectively, $10$. The Pfaffian cubic 
$$
Y:=\bP(A)\cap \Gr(2,V)^\vee
$$
is a smooth cubic threefold with a generically free action of $G$. Such a cubic (the Klein cubic threefold) is unique; up to a change of variables, it is given by the equation 
$$
\{y_1^2y_2+y_2^2y_3+y_3^2y_4+y_4^2y_5+y_5^2y_1=0\}\subset\bP^4_{y_1,\ldots,y_5}. 
$$
The dual Fano threefold 
$$
X:= \Gr(2,V)\cap\bP(A^\perp)\subset \bP^9
$$ 
of degree 14 is also smooth. The equations can be found at \cite{TZ-web}.

\subsection*{Stabilizer stratification}

We compute the fixed loci stratification of the $G$-action on $Y$ and $X$ with {\tt Magma}, recording the data for (orbit representatives of) 
loci $F$ with nontrivial generic stabilizeres:
\begin{itemize}
    \item stabilizer of $F$,
    \item the residual action on $F$,
    \item dimension of $F$,
    \item degree of $F$,
    \item characters of the induced action on the normal bundle to $F$.
\end{itemize}

The fixed loci stratification for the $G$-action on $Y$ is given by
$$
\begin{tabular}{c|c|c|c|c|c}

&Stabilizer&Residue&dim&deg&Characters\\
\hline
 1&   $C_6$& {triv}&  $0$&  1& $( 4, 1, 5)$\\\hline
2--3&   $C_2^2$&   {triv}&  $0$&  $1$& $(( 1, 0), (0, 1 ), ( 1, 1 ))$\\\hline
4&   $C_{11}$&   {triv}&  $0$&  $1$& $( 2, 3 , 4)$\\\hline
5&   $C_5$&  {triv}&  0&  1& $( 1 , 3 , 4)$\\\hline
6&   $C_5$&   {triv}&  0&  1& $( 3,  2 , 1)$\\\hline
7&   $C_3$&   {triv}&  $0$&  1& $(2, 1 , 2 )$\\\hline
8&   $C_2$&  $\fS_3$&  1&  3& $( 1 ,  1 )$\\\hline
9&   $C_2$&  $\fS_3$&  1&  1& $( 1, 1)$ \\
\end{tabular}
$$

\ 

On the smooth degree 14 Fano threefold $X$, it is:
$$
\begin{tabular}{c|c|c|c|c|c}
&Stabilizer&Residue&dim&deg&Characters\\
\hline
1&   $\fA_4$&  triv&  0&  1&N/A\\\hline
2& $\fS_3$&  triv&  0& 1&N/A\\\hline
3&   $C_6$&  triv&  0&  1& $ (3,1,5)$\\\hline
4&   $\fS_3$& triv&  0&  1&N/A\\\hline
5&   $C_2^2$&  triv&  0&  1& $(( 1, 1 ), ( 0, 1 ), ( 1, 0 ))$\\\hline
6&   $C_{11}$&  triv&  0& 1& $( 3,  10, 5)$\\\hline
7--8&   $C_5$&  triv&  0&  1& $(3 , 1, 2)$\\\hline
9&   $C_3$& triv&  0&  1& $( 2, 2, 1)$\\\hline
10&   $C_3$&  $C_2^2$&  1&  2& $(2, 1)$\\\hline
11&   $C_2$&  $\fS_3$&  1&  6& $( 1, 1)$\\
\end{tabular}
$$

\subsection*{Burnside invariants}

The vanishing relation in $\Burn_3(G)$ mentioned in Section~\ref{sect:birgeo} implies that the {\em only} nontrivial contribution to 
$[Y\actsfromright G]$ comes from points with stabilizer $C_{11}$; and similarly, for $X$. (This is justified even though both $Y$ and $X$ are not standard models.)
A direct computation shows that 
\begin{align*} 
[Y\actsfromright G] & = (C_{11}, 1\actsfromleft k, (2,3,4))\\
& = (C_{11}, 1\actsfromleft k, (3,10,5)) = [X\actsfromright G]\in \Burn_3(G).
\end{align*}
This is not surprising, given Remark~\ref{rema:birat}: $Y$ and $X$ are $C_{11}$-equivariantly birational.
In particular, the Burnside formalism does not allow to distinguish these actions. 

\subsection*{Twisted equivariant stable birationality}
Birational rigidity techniques yield (see \cite[Theorem 4.3]{BC-finite})
$$
Y\not\sim_G X. 
$$
On the other hand, 
applying Proposition~\ref{prop:stable}, we have a twisted $G$-equivariant stable birationality:

\begin{prop} \label{prop:psl211}
Let $G=\PSL_2(\bF_{11})$, acting on 
the Klein cubic threefold $Y$ and the associated degree 14 Fano threefold $X$. 
Then
$$
Y\times \bP^2\times \bP(V) \sim_G X\times \bP^2\times \bP(V),
$$
with trivial $G$-action on $\bP^2$ and
projectively linear $G$-action on $\bP(V)$, arising from a 6-dimensional irreducible representation $V$ of $\SL_2(\bF_{11})$. 
\end{prop}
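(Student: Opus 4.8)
The plan is to recognize the data assembled in Section~\ref{sect:psl2} as an instance of the equivariant Pfaffian-Grassmannian correspondence of Section~\ref{sect:dual}, and then to invoke Proposition~\ref{prop:stable} with the choice $W=V$. Once the hypotheses of that general statement are verified for the present $\PSL_2(\bF_{11})$-setup, the conclusion is immediate; all the geometric content sits in Proposition~\ref{prop:stable}, and what remains is bookkeeping about genericity.

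First I would collect the representation-theoretic input already recorded in Section~\ref{sect:psl2}. Here $\tilde G=\SL_2(\bF_{11})$ fits into the central extension $1\to C_2\to\tilde G\to G\to 1$, the central element $-I$ acts on the faithful irreducible $6$-dimensional representation $V$ by $-1$ (consistent with $\mathrm{char}(V)$), and the decomposition $\wedge^2(V^\vee)=A\oplus V_{10}$ exhibits $A$ as a faithful, $G$-irreducible, $\tilde G$-invariant $5$-dimensional subspace. Thus the inclusion $f\colon A\hookrightarrow\wedge^2(V^\vee)$ is an $A$-net of skew forms with $\tilde G$-invariant image. I then must check three genericity conditions. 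Generic freeness of $G$ on $\bP(V)$, on $\bP(f(A))=\bP(A)$, and on $\bP(f(A)^\perp)=\bP(A^\perp)$ follows from a single observation: each of $V,A,A^\perp$ is a faithful representation of the simple group $G$, so no nontrivial element acts as a scalar, whence for every $g\ne 1$ the fixed locus in the relevant projective space is proper, and removing the finite union of these loci leaves a dense open set of points with trivial stabilizer. Generic freeness of $G$ on $Y$ and on $X$ is read directly off the top strata of the fixed-locus tables of Section~\ref{sect:psl2}, which carry trivial generic stabilizer.

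With these in place, the construction is a genuine equivariant Pfaffian-Grassmannian correspondence, and by definition $Y_f=\bP(A)\cap\Gr(2,V)^\vee=Y$ and $X_f=\Gr(2,V)\cap\bP(A^\perp)=X$. Proposition~\ref{prop:stable}, applied with $W=V$, then yields
$$
Y\times\bP^2\times\bP(V)\sim_G X\times\bP^2\times\bP(V),
$$
with trivial $G$-action on $\bP^2$ and projectively linear $G$-action on $\bP(V)$, which is the assertion.

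The one substantive verification is the regularity of the net $f$, namely $\mathrm{rk}(f(a))\ge 4$ for all nonzero $a\in A$; this is what guarantees the existence of the theta-bundle $\cE_f$, the rank-two structure of $\cE_f^\vee$, and hence the validity of the diagram \eqref{dia:vec} on which Proposition~\ref{prop:stable} depends. Rather than computing ranks directly, I would deduce regularity from the smoothness of $Y$: the singular locus of the Pfaffian cubic $\Gr(2,V)^\vee$ is precisely the locus of skew forms of rank at most $2$, and regularity says exactly that $\bP(A)$ avoids it. If $\bP(A)$ met this locus at a point $p$, then $p\in Y$ and the differential of the Pfaffian would vanish at $p$, forcing $p\in\Sing(Y)$; since $Y$ is the smooth Klein cubic this cannot happen, so $f$ is regular. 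Everything else is either standard representation theory or a direct reading of the stratification tables, so I expect this rank-versus-smoothness step to be the only place requiring genuine argument.
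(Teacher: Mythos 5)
Your proposal is correct and takes essentially the same route as the paper: Section~\ref{sect:psl2} obtains Proposition~\ref{prop:psl211} precisely by applying Proposition~\ref{prop:stable} with $W=V$ to the equivariant Pfaffian--Grassmannian setup $\wedge^2(V^\vee)=A\oplus V_{10}$. Your additional verifications (generic freeness on $\bP(V)$, $\bP(A)$, $\bP(A^\perp)$, and deducing regularity of $f$ from the smoothness of the Klein cubic, since a rank~$\le 2$ point of $\bP(A)$ would be a singular point of $Y$) are sound and merely make explicit what the paper leaves implicit (compare Remark~\ref{rema:singu}).
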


\section{$C_3\rtimes \fD_4$-actions}
\label{sect:C3D4}

We provide additional examples of nonbirational, but twisted equivariantly stably birational, actions on smooth $Y$ and $X$. Here, the nonbirationality of the actions is established via Burnside invariants. 

\subsection*{Writing the representation}

Let $G=C_3\rtimes \fD_4$ and $V$ be
a faithful 6-dimensional representation  of $\tilde{G}\simeq C_3\rtimes\fD_8$, decomposing as
$$
V=V_2\oplus V_4, 
$$
where $V_2$ and $V_4$ are irreducible $\tilde G$-representations with character
\begin{align}\label{eqn:V4}
 \notag    \mathrm{char}(V_2)&=( 2, -2, 0, 0, 2, 0, -2, 0, 0, \lambda, -\lambda, 0 ),\quad\lambda=\zeta_8^3 - \zeta_8,\\
    \mathrm{char}(V_4)&=( 4, -4, 0, 0, -2, 0, 2, 0, 0, 0, 0, 0 ).
\end{align}
Then 
\begin{align}\label{eq:decomDihedralW2smooth}
    \wedge^2(V^\vee)=U_1^{\oplus2}\oplus U_2\oplus W_1^{\oplus2}\oplus W_2\oplus W_3\oplus W_4\oplus W_5,
\end{align}
where $U_1$ and $U_2$ are distinct $1$-dimensional and $W_i,i=1,\ldots,5$ distinct $2$-dimensional representations of $G.$


Let $A\subset\wedge^2(V^\vee)$  be a generic $5$-dimensional subspace 
isomorphic, as a $G$-representation to 
$$
U_1\oplus W_1\oplus W_2.
$$
One can check that the Pfaffian cubic 
$$
Y:=\bP(A)\cap \Gr(2,V)^\vee
$$
and the associated degree 14 Fano threefold
$$
X:=\bP(A^\perp)\cap \Gr(2,V)
$$
are smooth, and carry a generically free action of $G$; see \cite{TZ-web} for an example and equations.

\subsection*{Stabilizer stratification}

Note that $G$ contains {\em two} conjugacy classes of the Klein four-group $\mathfrak K_4$, corresponding to the two $\mathfrak K_4$-actions \eqref{eqn:0} and \eqref{eqn:1}, with respective characters in $A$ given by
\[
    (5,-3,1,-3),\quad (5,-1,1,-1).
\]
The fixed loci stratification for the first $\mathfrak K_4$-action
is given by 
$$
\begin{tabular}{c|c|c|c|c|c}

&Stabilizer&Residue&dim&deg&Characters\\
\hline
 1&   $\mathfrak K_4$& {triv}&  $1$&  3& $( (1,1),(1,0))$\\\hline
2&   $\mathfrak K_4$& {triv}&  $0$&  1& $( (1,0),(0,1),(1,0))$\\\hline
 3&   $\mathfrak K_4$& {triv}&  $0$&  1&$( (1,1),(0,1),(1,1))$\\\hline
  4&   $C_2$& $C_2$&  $2$&  3&$( 1)$\\\hline
    5&   $C_2$& $C_2$&  $2$&  3&$( 1)$\\\hline
    6&   $C_2$& $C_2$&  $1$&  1&$( 1,1)$\\  
\end{tabular}
$$
The fourth and fifth strata are smooth cubic surfaces, and the first stratum is a smooth cubic curve contained in both cubic surfaces.

\

On the smooth degree 14 Fano threefold $X$, we have:
$$
\begin{tabular}{c|c|c|c|c|c}

&Stabilizer&Residue&dim&deg&Characters\\
\hline
 1--7&   $C_2$& {triv}&  $0$&  1& $( 1,1,1)$\\\hline
   8&   $C_2$& {triv}&  $1$&  1& $( 1,1)$\\\hline
   9&   $C_2$& {triv}&  $1$&  1& $( 1,1)$\\\hline
     10&   $C_2$& $C_2$&  $1$&  6& $( 1,1)$\\
\end{tabular}
$$
The last stratum is a degree 6 smooth curve of genus $1.$ 
\

\subsection*{Burnside invariants}

Using the stabilizer stratification above, we obtain:

\begin{prop}
\label{prop:burn-c3}
For all $G'\subseteq G$ containing a conjugate of $\mathfrak K_4\subset G$ which acts on $A$ with character
$$
(5,-3,1,-3),
$$
we have
$$
Y\not\sim_{G'} X. 
$$
\end{prop}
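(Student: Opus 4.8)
The plan is to compute the class $[Y \actsfromright G']$ and $[X \actsfromright G']$ in $\Burn_3(G')$ and exhibit an incompressible symbol that appears in one but not the other. Both threefolds are birational (via the Pfaffian--Grassmannian correspondence), so the obstruction to $G'$-equivariant birationality must come from the symmetric Burnside class, and the natural place to look is the incompressible subgroup $\Burn_3^{\mathrm{inc}}(G')$. The key structural observation, already recorded in the stratification tables, is that on $Y$ the first $\mathfrak K_4$-action (character $(5,-3,1,-3)$ on $A$) fixes a \emph{smooth cubic curve} $C$ of genus $1$ contained in two smooth cubic surfaces, whereas the corresponding locus on $X$ has a genuinely different stabilizer profile. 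By Example~\ref{exam:incomp}, this configuration on $Y$ contributes the incompressible symbol
$$
(C_2, C_2 \actsfromleft k(S), (1)),
$$
arising from an involution fixing a cubic surface $S$ whose residual involution fixes the cubic curve.

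First I would reduce to the case $G' = \mathfrak K_4$ acting via character $(5,-3,1,-3)$, and then bootstrap to larger $G'$. The reduction works because $\Burn_n^{\mathrm{inc}}(G)$ is \emph{freely} generated by incompressible symbols, so producing a single symbol in $[Y \actsfromright \mathfrak K_4]$ absent from $[X \actsfromright \mathfrak K_4]$ already gives $Y \not\sim_{\mathfrak K_4} X$; for general $G'$ containing such a $\mathfrak K_4$, one pushes forward / restricts the class and tracks the corresponding symbol under the change-of-group maps between Burnside groups, using that the distinguished $\mathfrak K_4$ is specified up to conjugacy by its character on $A$. Next, to actually read off the $\mathfrak K_4$-class, I would pass to a standard model: the tables indicate $Y$ and $X$ are not yet standard models, so one blows up the relevant fixed strata (in particular the cubic curve $C$ and the singular/fixed loci) until all stabilizers are abelian and the strata are in normal-crossing position, then sum the symbols $(H, Z \actsfromleft k(F), \beta)$ over the resulting strata. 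The smoothness of $C$ (asserted in the stratification) is what guarantees the symbol $(C_2, C_2 \actsfromleft k(S), (1))$ is incompressible rather than reducible.

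The comparison step is then to verify that no such incompressible symbol survives in $[X \actsfromright \mathfrak K_4]$. From the $X$-table, the strata with nontrivial stabilizer are isolated points and curves with stabilizer $C_2$, and in particular the only positive-dimensional stratum with a residual $C_2$-action is a genus $1$ curve with trivial residue before accounting for the $\mathfrak K_4$; crucially there is no $C_2$ fixing a surface with residual involution cutting out a curve, so the cubic-surface symbol cannot appear. I would make this precise by computing both classes and projecting to $\Burn_3^{\mathrm{inc}}(G')$, where the discrepancy becomes an equality of distinct free generators, hence impossible.

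The main obstacle I expect is the standard-model computation on the $X$ side: one must be sure that after resolving to a standard model no blow-up \emph{creates} a symbol of the same incompressible type as on $Y$ (blow-up relations can move symbols between strata), and that the genus $1$ curve and the various $C_2$-fixed loci do not conspire, via the conjugacy and blow-up relations, to reproduce $(C_2, C_2 \actsfromleft k(S),(1))$. Controlling these relations—rather than the more mechanical enumeration of strata—is where the real care is needed; the incompressibility criterion of \cite[Proposition 2.6]{CTZ} and the freeness of $\Burn_3^{\mathrm{inc}}(G')$ are the tools that make the final contradiction clean once the models are correctly identified.
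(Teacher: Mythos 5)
Your proposal is correct and follows essentially the same route as the paper: restrict to the distinguished $\mathfrak K_4$, extract the incompressible symbol $(C_2, C_2\actsfromleft k(S),(1))$ from the two cubic-surface strata on $Y$ (an instance of Example~\ref{exam:incomp}, via \cite[Proposition 2.6]{CTZ}), observe from the stabilizer stratification of $X$ that no symbol of this type can arise there even after passing to a standard model, and conclude using that $\Burn_3^{\mathrm{inc}}(G')$ is freely generated. The only difference is cosmetic: the reduction from general $G'$ requires no change-of-group maps between Burnside groups, since a $G'$-equivariant birational map is already $\mathfrak K_4$-equivariant upon restriction, which is all the paper implicitly invokes.
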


\begin{proof}
To establish nonbirationality, consider the specified action of $\mathfrak K_4\subset G$ on $Y$. In the stabilizer stratification, 
we find incompressible symbols 
$$
(C_2,C_2\actsfromleft k(S), (1)),
$$
where $S$ is a cubic surface and the 
residual $C_2$-action fixes a smooth cubic curve on $S.$
This contributes to the class 
$$
[Y\actsfromright \mathfrak K_4] \in \Burn_3(\mathfrak K_4),
$$
and is an instance of Example~\ref{exam:incomp}. 
On the other hand, we see from the stabilizer stratification for $X$ that no such symbols arise in $[X\actsfromright \mathfrak K_4]$. This implies that 
$$
Y\not\sim_{\mathfrak K_4} X.
$$    
\end{proof}

\subsection*{Twisted equivariant stable birationality}

Applying Proposition~\ref{prop:stable} and the construction there with $W=V_4$, we obtain: 

\begin{prop}
\label{prop:c3}
We have
$$
Y\times \bP^2\times \bP(V_4)\sim_{G} X\times \bP^2\times \bP(V_4),
$$
with trivial $G$-action on $\bP^2$ and projectively linear $G$-action on 
$\bP(V_4)$, arising from the faithful  $\tilde{G}$-representation 
$V_4$  given by \eqref{eqn:V4}. 
\end{prop}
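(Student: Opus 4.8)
The plan is to invoke Proposition~\ref{prop:stable} with the auxiliary representation $W=V_4$, so essentially all the work is to verify that $V_4$ satisfies the hypotheses of the no-name construction described just before Proposition~\ref{prop:stable}. First I would record that the $C_3\rtimes\fD_4$-action on $Y$ and $X$ arising from the net $A\cong U_1\oplus W_1\oplus W_2\subset\wedge^2(V^\vee)$ is an instance of the equivariant Pfaffian-Grassmannian correspondence, with $\tilde G\simeq C_3\rtimes\fD_8$ and $V=V_2\oplus V_4$; this is exactly the setup fixed at the start of Section~\ref{sect:C3D4}, so $Y_f$ and $X_f$ are the smooth $Y$ and $X$ here.

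Next I would check that $V_4$ is admissible as the representation $W$ in the diagram preceding Proposition~\ref{prop:stable}: it must be a faithful representation of $\tilde G$ on which the central $\bG_m=\ker(\tilde G\to G)$ acts by scalars with the same central character as $V$, so that $\bP(V_4)$ carries a well-defined, generically free, projectively linear $G$-action. From the character \eqref{eqn:V4} one reads off that $V_4$ is faithful as a $\tilde G$-representation (its kernel is trivial), and that the central involution of $\tilde G$ lying over the identity of $G$ acts by $-1$, matching the central character of $V_2$ and hence of $V$; this is the compatibility that makes $G$ act on $\bP(V_4)$ rather than merely $\tilde G$. I would confirm, directly from the character table or with {\tt Magma}, that the induced $G$-action on $\bP(V_4)$ is generically free.

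With these verifications in hand, the construction preceding Proposition~\ref{prop:stable} applies verbatim with $W=V_4$. The $\tilde G$-equivariant birational isomorphism $\cE_f^\vee\sim_{\tilde G}\cU_f$ from diagram~\eqref{dia:vec}, crossed with $V_4$, yields $\tilde G$-equivariant rank two vector bundles $\cE_f^\vee\times V_4\to Y\times V_4$ and $\cU_f\times V_4\to X\times V_4$ with generically free $\tilde G$-action on the bases; the no-name lemma then gives $Y\times\bA^2\times V_4\sim_{\tilde G}X\times\bA^2\times V_4$ with trivial action on $\bA^2$. Passing to $\bG_m$-quotients, where $\bG_m=\ker(\tilde G\to G)$ acts along the vector-bundle fibres and the $V_4$-scalars, produces the desired $G$-equivariant birationality after projectivization, namely $Y\times\bP^2\times\bP(V_4)\sim_G X\times\bP^2\times\bP(V_4)$ with trivial $G$-action on $\bP^2$.

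The main obstacle, and the only genuinely content-bearing step, is the central-character bookkeeping in the second paragraph: one must be sure that $V_4$ has the \emph{same} central character as $V$ (equivalently, that $V_2$ and $V_4$ agree on the kernel $\bG_m$), since otherwise the simultaneous $\bG_m$-quotient that descends the $\tilde G$-birationality to a $G$-birationality would not be well-defined. Because $V=V_2\oplus V_4$ is a single $\tilde G$-representation inducing a $G$-action on $\bP(V)$, the two summands necessarily share a central character, so this holds automatically; I would nonetheless spell it out, as it is precisely the hypothesis of the no-name argument for projectively linear actions recorded in \eqref{eqn:proj}. Everything else is a direct citation of Proposition~\ref{prop:stable}.
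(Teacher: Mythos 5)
Your proposal is correct and matches the paper's proof, which consists precisely of invoking Proposition~\ref{prop:stable} with $W=V_4$; your additional verifications (faithfulness of $V_4$, the shared central character of $V_2$ and $V_4$ forced by the projectively linear $G$-action on $\bP(V)$, and generic freeness on $\bP(V_4)$) are exactly the implicit hypothesis checks the paper leaves to the reader.
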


\section{Singular examples}
\label{sect:cubic-sing}

Here, we consider actions of dihedral groups $\mathfrak D_{2n}$ of order 
$4n$ on cubic threefolds with 2$\sD_4$-singularities.

\subsection*{Writing the representation}

We start with a faithful irreducible $2$-dimensional representation $V_2=V_2(\chi)$ of $\fD_{4n}$, determined by a primitive character $\chi=\chi_{4n}$ of $C_{4n}$. In detail, choose generators
$$
\fD_{4n}=\langle s,t\mid s^{4n}=t^2=tsts=1\rangle,
$$
and consider the representation $V_2$ given by 
$$
s\mapsto\begin{pmatrix}
  \zeta_{4n}^{-r}&0\\
    0&\zeta_{4n}^{r}
\end{pmatrix},\quad t\mapsto\begin{pmatrix}
   0&1\\
    1&0
\end{pmatrix},
$$
where $r$ is determined by $\chi_{4n}$, $\gcd(r,4n)=1$. Put
$$
V=V_2^{\oplus 3},
$$
this is a faithful 6-dimensional representation of $\fD_{4n}$, with 
a generically free action of $\fD_{2n}$ on its projectivization
$\bP(\wedge^2 V)$. 

\subsection*{Decomposing the representations}
We have:
\begin{align}\label{eq:decomDihedralW2}
    \wedge^2(V)=I^{\oplus3}\oplus W_1^{\oplus 6}\oplus W_2^{\oplus 3},
\end{align}
where 
\begin{itemize} 
\item $I$ is the trivial representation of $\fD_{2n}$,
\item $W_2$ is the irreducible 2-dimensional representation of $\fD_{2n}$ determined by the character $\chi_{4n}^2$, and 
\item $W_1$ is the 1-dimensional representation of $\fD_{2n}$ given by 
$$
s\mapsto (1),\quad t\mapsto (-1).
$$
\end{itemize}
Let $z_1,\ldots,z_6$ be coordinates of $V,$ and fix the following basis of $\wedge^2(V)$:
\begin{multline*}
    u_1=z_1\wedge z_2,\quad u_2=z_1\wedge z_3,\quad\ldots,\quad u_5=z_1\wedge z_6,\quad
    u_6=z_2\wedge z_3,\\ u_7=z_2\wedge z_4,\quad \ldots, \quad u_{15}=z_5\wedge z_6.
\end{multline*}
We can rewrite the decomposition in an appropriate basis
\begin{align*}
    I^{\oplus3}&=\langle u_3+u_6\rangle\oplus \langle u_5+u_8\rangle\oplus \langle u_{12}+u_{13}\rangle,\\
     W_1^{\oplus6}&=\langle u_1\rangle\oplus\langle u_{10}\rangle\oplus\langle u_{15}\rangle\oplus\langle u_3-u_6\rangle\oplus \langle u_5-u_8\rangle\oplus \langle u_{12}-u_{13}\rangle,\\
     W_2^{\oplus 3}&=\langle u_2, u_7\rangle\oplus\langle u_4, u_9\rangle\oplus\langle u_{11}, u_{14}\rangle.
\end{align*}

\subsection*{The invariant Pfaffian cubic}
Choose a basis $v_1,\ldots,v_{15}$
of $\wedge^2(V^\vee)$ corresponding to $u_1,\ldots,u_{15}$, we have a similar decomposition of $\wedge^2(V^\vee)$ as in \eqref{eq:decomDihedralW2}.
Fix the $5$-dimensional subspace $A\subset\wedge^2(V^\vee)$, with basis
\begin{align}
\label{eqn:A}
       y_1&=v_2,\\
y_2&=v_7,\nonumber\\
y_3&=v_5 -v_8-v_{10} + v_{12} -v_{13}-v_{15},\nonumber\\
y_4&=-12v_1 + v_5 -v_8 + 2v_{10} + v_{12} -v_{13},\nonumber\\
y_5&=-3v_1 + v_5 -v_8 -v_{10}.\nonumber
\end{align}
Then $\fD_{2n}$ acts generically freely on $\bP(A)=\bP^4_{y_1,\ldots,y_5}$, via 
\begin{align*}
    s&: \mathbf{(y)}\mapsto (\zeta_{2n}^{-r}y_1,\zeta_{2n}^{r}y_2,y_3,y_4,y_5),\\
      t&: \mathbf{(y)}\mapsto (y_2,y_1,-y_3,-y_4,-y_5).
\end{align*}
This induces a $\fD_{2n}$-action on
$$
Y:=\bP(A)\cap\Gr(2,V)^\vee,
$$ 
a cubic threefold with $2\sD_4$-singularities given  by 
\begin{equation} 
\label{eqn:yyy}
Y=\{y_1y_2y_3 + y_3^3 + 3y_3^2y_5 + 45y_3y_4^2 + 10y_4^3 + y_5^3=0\}\subset\bP^4_{y_1,\ldots,y_5}.
\end{equation}

\subsection*{The invariant Fano threefold} 
The annihilator $A^\perp\subset \wedge^2(V)$ of $A$ is a $10$-dimensional subspace, with basis
\begin{align*}
&x_1=u_{1} - 3u_{10} +9u_{12}- 9u_{13} + 21u_{15},\quad &x_2=u_{3},\quad &x_3=u_{4},\\
&x_4=u_{5}-u_8 + 2u_{10} -3u_{12}+ 3u_{13} - 6u_{15},\quad &x_5=u_{6},\quad & \\
&x_6=u_5+u_8,\quad &x_7=u_{9},\quad &x_8= u_{11},
\\
&x_9= u_{12} + u_{13}, \quad &x_{10}=u_{14}.\quad &  
\end{align*}
The degree 14 Fano threefold 
$$
X=\Gr(2,V)\cap\bP(A^\perp) \subset\bP^9_{x_1,\ldots,x_{10}}
$$ 
is given by 
the vanishing of the following polynomials:
\begin{align*}
 &-144x_{1}^2 + 114x_{1}x_{4} - 21x_{4}^2 - x_{8}x_{10} + x_{9}^2,\\
&    3x_{1}x_{4} - 9x_{1}x_{5} - 3x_{1}x_{6} - 2x_{4}^2 + 3x_{4}x_{5} + 2x_{4}x_{6} + x_{5}x_{9},\\
&    -9x_{1}x_{7} + 3x_{4}x_{7} + x_{4}x_{10} - x_{6}x_{10} + x_{7}x_{9},\\
&    21x_{1}x_{2} - 9x_{1}x_{4} - 9x_{1}x_{6} - 6x_{2}x_{4} - x_{3}x_{10} + 3x_{4}^2 + 3x_{4}x_{6} + x_{4}x_{9} +
        x_{6}x_{9},\\
&    -9x_{1}x_{3} + 3x_{3}x_{4} - x_{3}x_{9} + x_{4}x_{8} + x_{6}x_{8}\\
&    x_{1}x_{8} + x_{3}x_{5},\\
&    -9x_{1}^2 + 3x_{1}x_{4} + x_{1}x_{9} + x_{2}x_{4} - x_{2}x_{6},\\
&    -3x_{1}^2 + 2x_{1}x_{4} + x_{2}x_{5},\\
&    x_{1}x_{10} - x_{2}x_{7},\\
&    -9x_{1}x_{2} - 3x_{1}x_{4} - 3x_{1}x_{6} + 3x_{2}x_{4} - x_{2}x_{9} + 2x_{4}^2 + 2x_{4}x_{6},\\
&    -3x_{1}x_{7} + 2x_{4}x_{7} + x_{5}x_{10},\\
&    9x_{1}^2 - 3x_{1}x_{4} + x_{1}x_{9} + x_{4}x_{5} + x_{5}x_{6},\\
&    21x_{1}^2 - 6x_{1}x_{4} - x_{3}x_{7} - x_{4}^2 + x_{6}^2,\\
&    -3x_{1}x_{3} - x_{2}x_{8} + 2x_{3}x_{4},\\
&    9x_{1}x_{4} + 21x_{1}x_{5} - 9x_{1}x_{6} - 3x_{4}^2 - 6x_{4}x_{5} + 3x_{4}x_{6} + x_{4}x_{9} - x_{6}x_{9} + 
        x_{7}x_{8}.
\end{align*}
The group $\fD_{2n}$ acts generically freely on $\bP^9$ and $X$, via
\begin{align*}
    s&:\mathbf{(x)}\mapsto(x_1,x_2,\zeta^{-r}x_3,x_4,x_5,x_6,\zeta^{r}x_7,\zeta^{-r}x_8,x_9,\zeta^{r}x_{10}),\quad \zeta=\zeta_{2n},\\
    t&:\mathbf{(x)}\mapsto(-x_1,x_5,x_7,-x_4,x_2,x_6,x_3,x_{10},x_9,x_8).
\end{align*}
The Fano threefold $X$ is {\em singular} along two disjoint {\em lines}
$$
\{x_1=x_2=x_3=x_4=x_5=x_6=x_8=x_9=0\}
$$
and
$$\{x_{1}=
x_{2}=
x_{4}=
x_{5}=
x_{6}=
x_{7}=
x_{9}=
x_{10}=0
\}.
$$

\begin{rema}
\label{rema:singu}
    Note that here  $\Sing(X)\not\simeq\Sing(Y)$, contrary to \cite[Proposition A.4]{KuzCubic}. This is explained by the fact that the embedding $f:A\hookrightarrow\wedge^2 (V^\vee)$ is {\em not} regular, which requires that {\em all} forms in $A$ have rank $\ge 4$. In our case, 
    for general subrepresentations in $\wedge^2 (V^\vee)$ isomorphic to $A$, from \eqref{eqn:A}, the locus $\bP(A)\cap\Gr(2,V^\vee)$ consists of two {\em points}, 
    representing skew-symmetric forms of rank $2$ in $\wedge^2 (V^\vee)$. These two points are the singular points of the cubic hypersurface $Y$. 
\end{rema}

\subsection*{Stabilizer stratification}

We record the stabilizer stratification for the $G$-action on the singular cubic threefold $Y$:

$$
\begin{tabular}{c|c|c|c|c|c}

&Stabilizer&Residue&dim&deg&Character\\
\hline
 1&   $\fD_{2n}$& {triv}&  $1$&  3& N/A\\\hline
2&   $C_n$&   {triv}&  $0$&  $1$& Singular point\\\hline
3&   $C_2^2$&  {triv}&  $0$&  1& $(( 0, 1 ), ( 1, 0 ), ( 0,1 ))$\\\hline
4&   $C_2^2$&  {triv}&  $0$&  1& $(( 0, 1 ), ( 1, 0 ), ( 0,1 ))$\\\hline
5&   $C_2$&  $C_2$&  $2$&  $3$& $( 1)$\\\hline
6&   $C_2$&   $C_2$&  $2$&  $3$& $( 1)$\\\hline
7&   $C_2$&  $C_n$&  $1$&  1& $(1,1)$\\
\end{tabular}
$$

\ 

As in Section~\ref{sect:C3D4}, the strata 5 and 6 are cubic surfaces, with residual action fixing the same smooth cubic curve $E_Y$ in the stratum 1. 

\ 

The stratification on the degree 14 Fano threefold $X$ is:

$$
\begin{tabular}{c|c|c|c|c|c}

&Stabilizer&Residue&dim&deg&Character\\
\hline
 1&   $C_n$& {triv}&  $1$&  1& Singular line\\\hline
2&   $C_n$&   $C_2$&  $1$&  $6$& $(2r,-2r)$\\\hline
3--8&   $C_2$&  triv&  $0$&  1& $(1,1,1)$\\\hline
9&   $C_2$&  triv&  $1$&  $1$& $(1,1)$\\\hline
10&   $C_2$&  triv&  $1$&  $1$& $(1,1)$\\
\end{tabular}
$$

\subsection*{Burnside invariants}

The first stratum on $Y$ has generic stabilizer $\fD_{2n}$, it is a smooth cubic curve 
$$
E_Y:=\{ y_1=y_2=0 \} \cap Y.
$$
We are in the situation of Example~\ref{exam:dih}: 
the model $Y$ is not in standard form; after blowing up $E_Y$, we obtain the incompressible symbol
\begin{equation} 
\label{eqn:symbb}
(C_2, \fD_{n}\actsfromleft k(E_Y)(t), (1)).
\end{equation}
Note that the action on the generic fiber of the projectivization of the normal bundle to $E_Y$ is nonabelian.

On the other side, the second stratum in the stabilizer stratification of the 
degree 14 Fano threefold $X$ is a degree 6, smooth genus 1 curve 
$$
E_X:=\{x_3=x_7=x_8=x_{10}=0\}\cap X,
$$
with stabilizer $\langle s\rangle\simeq C_n$. We checked, via {\tt Magma}, that $E_Y$ and $E_X$ have the same $j$-invariant and thus are isomorphic. 

We know that the $G$-action on $X$ is not in standard form. However, even after blowups, the resulting class in $\Burn_3(G)$
differs from the symbol \eqref{eqn:symbb} -- we cannot get a nonabelian action on the fibers of a $\bP^1$-bundle over $E_X$, from an abelian stabilizer. 

Therefore, the $\fD_{2n}$-actions on $X$ and $Y$ are not equivariantly birational.

\section{$\fS_5$-actions}
\label{sect:s5}

Here we investigate the Pfaffian construction for $
G=\fS_5.
$
Consider the central extension 
$$
\tilde G=C_2\rtimes\SL_2(\bF_5)
$$ 
with GAP ID (240,90). Note that $\tilde G$ is one of the Schur covers of $\fS_5$ (the other central extensions yield a picture similar to the one described below). 

The group $\tilde{G}$ has two faithful irreducible 6-dimensional linear representations, differing by the sign character. Let $V$ be the one with character 
$$
(6,-6, 0,0,0,1,0,0,0,-\zeta^3-\zeta,\zeta^3+\zeta,-1),\quad \zeta=\zeta_8.
$$
Then $\wedge^2(V)$ is a faithful $G$-representation decomposing as
$$
\wedge^2(V^\vee)=V_1\oplus V_4\oplus V_5\oplus V_5',
$$
where the character of each summand is given by 
\begin{align*}
    V_1&: ( 1, -1, 1,  1, -1, 1, -1 ),\\
    V_4&: ( 4, -2, 0,  1, 0,  -1,1 ),\\
    V_5&: ( 5, -1, 1, -1, 1, 0,-1 ),\\
    V_5'&:( 5, 1,1,-1,-1,0,1 ).
\end{align*}
There are three distinct 5-dimensional subspaces $A\subset \wedge^2(V^\vee)$, namely $V_5$, $V_5'$ and  $V_1\oplus V_4$ . The resulting Pfaffian cubic threefolds
$$
Y=\bP(A)\ \cap \ \Gr(2,V)^\vee
$$
have different singularity types. 

\subsection*{$A=V_5$} The cubic threefold  $Y$ has 6 $\sA_1$-singularities. The $\fS_5$-action on $Y$ is unique, see \cite[Proposition 7.3]{CTZ}.

\subsection*{$A=V_5'$} The cubic $Y$ 
is the Segre cubic threefold, the unique cubic with 10 $\sA_1$-singularities. 
The $\fS_5$-action with the prescribed character is unique. It is the {\em nonstandard} $\fS_5$ in $\Aut(Y)=\fS_6$, and is linearizable, see \cite[Section 6]{CTZ}.

\subsection*{$A=V_1\oplus V_4$} The cubic threefold $Y$ has 5 $\sA_1$-singularities, with a transitive action of $\fS_5$. Such an $\fS_5$-action is unique \cite[Section 6]{CTZ}. By \cite[Theorem 3.1]{VAZ}, the only $G$-Mori fiber spaces which are $G$-birational to $Y$ are a smooth quadric threefold and $Y$ itself; under the standard Cremona involution,  $Y$ is $\fS_5$-equivariantly birationally transformed to the smooth quadric threefold given by 
\begin{multline*}
    y_1y_2 + y_1y_3 + y_2y_3 + y_1y_4 + y_2y_4 + y_3y_4 + y_1y_5 + y_2y_5 + y_3y_5 + y_4y_5=0,
\end{multline*}
with the same $\fS_5$ permutation action on the coordinates.
The singular locus of the dual Fano threefold $X$ consists of $10$ points, i.e., for each singularity on $Y$ there {\em two} singular points on $X$ -- the corresponding $A$-net is not regular.

\

\bibliographystyle{alpha}
\bibliography{pfaff-cubic}

\end{document}